\newcommand*{\todo}[1]{\textcolor{red}{#1}}
\newcommand{\op}{
  \mathop{
    \vphantom{\bigoplus} 
    \mathchoice
      {\vcenter{\hbox{\resizebox{\widthof{$\displaystyle\bigoplus$}}{!}{$\boxplus$}}}}
      {\vcenter{\hbox{\resizebox{\widthof{$\bigoplus$}}{!}{$\boxplus$}}}}
      {\vcenter{\hbox{\resizebox{\widthof{$\scriptstyle\oplus$}}{!}{$\boxplus$}}}}
      {\vcenter{\hbox{\resizebox{\widthof{$\scriptscriptstyle\oplus$}}{!}{$\boxplus$}}}}
  }\displaylimits 
}
\newcommand{\Pcal}{{\mathcal P}}
\newcommand{\KK}{{\mathcal K}}    
\newcommand{\CC}{\mathbb{C}}
\newcommand{\FF}{\mathbb{F}}
\renewcommand{\KK}{\mathbb{K}}
\newcommand{\PP}{\mathbb{P}}
\newcommand{\RR}{\mathbb{R}}
\renewcommand{\SS}{\mathbb{S}}
\newcommand{\WW}{\mathbb{W}}
\newcommand{\ZZ}{\mathbb{Z}}
\renewcommand{\root}{\xi}
\newcommand{\Root}{\xi}
\newcommand{\mcm}[3]{\newcommand{#1}[#2]{{\ensuremath{#3}}}} 
\mcm{\restric}{0}{\upharpoonright}
\numberwithin{equation}{section}
\newtheorem{theorem}[equation]{Theorem}
\newtheorem{lemma}[equation]{Lemma}
\newtheorem{question}[equation]{Question}
\newtheorem{corollary}[equation]{Corollary}
\newtheorem*{theorem*}{Theorem}
\theoremstyle{definition}
\newtheorem{defn}[equation]{Definition}
\newtheorem{example}[equation]{Example}
\theoremstyle{remark}
\newtheorem{remark}[equation]{Remark}
\begin{document}
\title{Extensions of Hyperfields}
\author{Steven Creech}
\email{screech6@math.gatech.edu}
\address{School of Mathematics,
         Georgia Institute of Technology, USA}

\date{\today}
\thanks{\textbf{Acknowledgement.} I would like to thank Professor Matthew Baker for coming up with this problem and advising me through its completion. }

\begin{abstract}

We develop a theory of extensions of hyperfields that generalizes the notion of field extensions. Since hyperfields have a multivalued addition, we must consider two kinds of extensions that we call weak hyperfield extensions and strong hyperfield extensions. For quotient hyperfields, we develop a method to construct strong hyperfield extensions that contain roots to any polynomial over the hyperfield. Furthermore, we give an example of a hyperfield that has two non-isomorphic minimal extensions containing a root to some polynomial. This shows that the process of adjoining a root to a hyperfield is not a well-defined operation.

\end{abstract}

\maketitle

\section{Introduction} \label{sec:intro}

Let $F$ be a field, and $f\in F[x]$ be an irreducible polynomial. Results from classical field theory tells us that we can always find some field extension $F\subseteq L$ such that $f$ contains a root in $L$. The construction of this field extension is quite simple, we just take the ring of polynomials over $F$ and quotient out be the ideal generated by $f$, that is $L=F[x]/\langle f\rangle$ is a field extension of $F$ that contains a root to $f$. Thus, given some field $F$, and a root $\root$ of some polynomial $f\in F[x]$, we can make sense of the field $F(\root)$ as the smallest field extension of $F$ containing the root $\root$. Furthermore, this extension is unique up to isomorphism. This minimal extension plays an important role in Galois theory.

\bigskip

In this paper, we set out to answer an analogous question over hyperfields. That is, given a polynomial $k$ over some hyperfield $K$, is there a hyperfield extension of $K$ that contains some root to $k$. Furthermore, if such extensions exist are the minimal extensions of $K$ containing a root unique up to isomorphism. That is for a hyperfield $K$ and a root $\Root$ to some polynomial $k$ can we join the root $\Root$ to $K$ and make sense of the hyperfield $K(\Root)$. As addition over hyperfields is a multivalued operation, there are two definitions one can take for a hyperfield extension namely we can require that the sum of two elements in the ground hyperfield is equal to sum in the extension. Alternatively, we could require set inclusion. When equality is held we shall say that we have a \textit{strong hyperfield extension}, and when inclusion is held we shall say it is a \textit{weak hyperfield extension}. 

\subsection{Statement of Results} In this paper, we will develop a method for constructing strong hyperfield extensions containing roots for quotient hyperfields. Furthermore, we shall exhibit an example of a hyperfield and a polynomial that contains two non-isomorphic minimal extensions. That is, for a hyperfield $K$ and a root $\Root$ to a polynomial $k$, adjoining the root to form $K(\Root)$ is not a well-defined operation. Without the uniqueness of minimal extensions, trying to develop a Galois theory over hyperfields becomes a challenge.

\subsection{Content Overview} In section \ref{sec:prelim} of this paper, we give the definitions of hyperfields, polynomials over hyperfields, and other related concepts. Furthermore, we shall review the construction of quotient hyperfields developed by Krasner in \cite{krasner1983class}. This construction will play a key role in our construction of extensions. In section \ref{sec:ext}, we construct strong hyperfield extensions for quotient hyperfields and prove that our construction is valid.
In section \ref{sec:minext}, we use the weak hyperfield $\WW$ and a polynomial with no roots over $\WW$ to construct two extensions containing roots. Then we show that one of the two extensions is minimal, but not contained in the other. This implies that there must be two non-isomorphic minimal extensions of $\WW$ containing a root.

\section{Preliminaries}\label{sec:prelim}

\subsection{Hyperfields}

Our main object of interest are hyperfields which were introduced by M. Krassner in 1956 \cite{krasner1956approximation}. Hyperfields are a generalization of fields with the difference being that addition in hyperfields is a multivalued operation. Thus, we can think of addition as a binary operator from $K\times K$ going to the nonempty subsets of $K$; formally, we can say that $\boxplus:K\times K\rightarrow \Pcal^\times(K)$. As addition is multivalued, we must make sense of associativity. We will first give the formal definition of a hyperfield and then give a discussion on associativity.

\begin{defn}
A \textit{hyperfield} is a $5$-tuple $(K,\boxplus,\odot,0,1)$ that satisfies the following conditions:

\begin{enumerate}

    \item $(K^\times=K\backslash\{0\},\odot, 1)$ forms an abelian group.
    
    \item For all $x\in K$ we have that $0\odot x=x\odot 0 = 0$
    
    \item The following distributive laws hold for all $x,y,z\in K$:
    
    \begin{enumerate}
    
    \item $x\odot(y\boxplus z)=(x\odot y)\boxplus (x\odot z)$
        
    \item $(x\boxplus y)\odot z=(x\odot z)\boxplus (y\odot z)$
        
    \end{enumerate}
    
    \item $(K,\boxplus, 0)$ forms a commutative hypergroup that is:
    
    \begin{enumerate}
    
        \item $\boxplus$ is an associative that is for all $x,y,z\in K$, we have that $(x\boxplus y)\boxplus z= x\boxplus(y\boxplus z)$
        
        \item $\boxplus$ is commutative that is for all $x,y\in K$, we have that $x\boxplus y= y\boxplus x$
        
        \item $0$ is an additive identity that is for all $x\in K$ we have that $0\boxplus x=x\boxplus 0=\{x\}$
        
        \item For all $x\in K$ there is a unique element $-x\in K$ such that $0\in x\boxplus -x=-x\boxplus x$
        
        \item $x\in y\boxplus z$ if and only if $-y \in -x \boxplus z$
        
    \end{enumerate}
    
\end{enumerate}

\end{defn}

{
Since $a\boxplus b$ is a set, we will need to define what it means to preform hyperaddition on a set $A$ and an element $c$, so we define:

\[
A\boxplus c=\bigcup_{x\in A} x\boxplus c
\]

We can further generalize this to say what happens when we preform hyperaddition on two sets $A$ and $B$:

\[
A\boxplus B=\bigcup_{x\in A, y\in B} x\boxplus y
\]

That is we can think of the hypersum of two sets as the union of all hypersums of pairs of elements from the sets.

These definitions allow us to recursively define arbitrarily long hypersums. That is we can recursively define $x_1\boxplus x_2\boxplus...\boxplus x_n$ as:
\[
x_1\boxplus x_2\boxplus...\boxplus x_n=\bigcup_{y\in x_1\boxplus x_2\boxplus...\boxplus x_{n-1}}y\boxplus x_n
\]
}


{
Now that we have defined hyperfield, let us give some examples.

\begin{example}
(Fields) A field $F$ forms a hyperfield with the hyperaddition as the original addition in the field that is $x\boxplus y=\{x+y\}$. The multiplication is just that of the field that is $x\odot y=x\cdot y$
\end{example}

\begin{example}
(Krasner Hyperfield) The Krasner hyperfield $\KK$ is a hyperfield over the set $\{0,1\}$ where $0$ acts as the additive identity and $1$ acts as the multiplicative identity. Thus, we have that $0\boxplus 1=1\boxplus 0=\{1\}$, and we have $1\boxplus 1=\{0,1\}$.

\end{example}

\begin{example}
(Hyperfield of Signs) The hyperfield of signs $\SS$ is a hyperfield over the set $\{0,1,-1\}$. The addition and multiplication are given by the following Cayley tables:

\begin{table}[h]
\parbox{.45\linewidth}{
\centering
\begin{tabular}{|l|l|l|l|}
\hline
$\boxplus$ & 0 & 1 & -1 \\ \hline
0 & \{0\} & \{1\} & \{-1\} \\ \hline
1 & \{1\} & \{1\} & \{0,1,-1\} \\ \hline
-1 & \{-1\} & \{0,1,-1\} & \{-1\} \\ \hline
\end{tabular}

\caption{Cayley table for addition for $\SS$}
}
\hfill

\parbox{.45\linewidth}{
\centering
\begin{tabular}{|l|l|l|}
\hline
$\odot$ & 1 & -1 \\ \hline
1 & 1 & -1 \\ \hline
-1 & -1 & 1 \\ \hline
\end{tabular}
\caption{Cayley table for multiplication for $\SS$}
}
\end{table}

The way to think of the hyperfield of signs is to interpret the element $1$ as any positive number, think of the element $-1$ as any negative number, and think of $0$ as $0$. 

So multiplication is defined in the obvious manner as $1\odot 1=1$ is interpreted as the product of two positive numbers is positive. The product $1\odot -1=-1$ is interpreted as the product of a positive and a negative is negative, and $-1\odot -1=1$ is interpreted as the product of two negative numbers is positive. 

We interpret addition as follows $1\boxplus 1=\{1\}$ we view this as the sum of two positive numbers is positive. We interpret $-1\boxplus -1=\{-1\}$ as the sum of two negative numbers is negative. The interesting case is the sum $1 \boxplus -1=\{0,1,-1\}$ which we interpret as the sum of a positive and a negative number can be either positive, negative, or zero. We will see later that this interpretation is quite natural due to the quotient hyperfield structure $\SS=\RR/\RR_{>0}$.

\end{example}

\begin{example}

(Weak Hyperfield of Signs) The weak hyperfield of signs or simply the weak hyperfield, $\WW$, is a hyperfield over the set $\{0,1,-1\}$ where multiplication is defined in the same manner as the hyperfield of signs, but addition is defined by $1\boxplus 1=\{1,-1\}$,\\ $-1\boxplus -1=\{1,-1\}$, and $1\boxplus -1=\{0,1,-1\}$. 

\end{example}
}

\subsection{Quotient Hyperfields}

Each of the examples we gave above are a special class of hyperfields called a quotient hyperfields. A quotient hyperfield is a hyperfield that is constructed from a field by quotienting out by a multiplicative subgroup. For a long time, it was an open question on whether or not all hyperfields admit a quotient structure; however, there exist non-quotient hyperfields see \cite{massouros1985methods}. As the construction of quotient hyperfields is important to understanding our construction of strong hyperfield extensions, we will go over the construction of quotient hyperfields; however, we will omit the proof that the quotient is a hyperfield see \cite{krasner1983class}.

Given a field $F$ and $G\leq F^\times$ a multiplicative subgroup of $F$, we construct the following equivalence relation on the elements of $F$. We say that $x\sim y$ if and only if there is some element $g\in G$ such that $x=gy$. We note that for $x\in F^\times$, $[x]$ is the coset of $F^\times/G$, so we can think of the equivalence classes as the multiplicative cosets where $[0]=\{0\}$.

We now form the quotient hyperfield $K=F/G$ by modding out by the equivalence $\sim$. The elements of $K$ are the equivalence classes under $\sim$, and we have the following induced operations:

\begin{enumerate}
    \item $[x]\odot [y]=[x\cdot y]$
    \item $[z]\in [x]\boxplus[y]$ if and only if there is some $z'\in[z]$ such that $z'=x'+y'$ for some $x'\in[x]$ and $y'\in[y]$
\end{enumerate}

The hyperaddition formulation might seem complicated; however, another way to think of the hypersum $[x]\boxplus [y]$ would be to think of $[x]$ and $[y]$ as cosets, then define \\$[x]+[y]=\{x'+y': x\in[x], y\in[y]\}$. That is $[x]+[y]$ is the set sum, so the hypersum $[x]\boxplus [y]=\{[z]:z\in [x]+[y]\}$, so after taking the set sum, we take all coset representatives.

\begin{remark}\label{rem:rem1}
We note that if $z\in [x_1]+[x_2]+...+[x_n]$, then $[z]\in [x_1]\boxplus[x_2]\boxplus...\boxplus[x_n]$.
\end{remark}

We know that these operations are well-defined and that this quotient forms a hyperfield by \cite{krasner1983class}. When we are working with a quotient hyperfield $K=F/G$, we make the convention to denote the elements of $K$ using the equivalence class notation $[x]\in K$, so we can distinguish between the element $[x]\in K$ and the element $x\in F$.

\begin{example}

(Fields) Fields form a quotient hyperfield with the quotient structure \\$F=F/\{1\}$. We see this as $x\sim y$ if and only if $x=y\cdot 1=y$. Thus, we have that $[x]=\{x\}$ for all $x\in F$. 

\end{example}

\begin{example}

(Krasner Hyperfield) The Krassner hyperfield can be obtained as a quotient of any field $F$ by quotienting out by the entire multiplicative subgroup, that is $\KK= F/F^\times$. We can see this as our two equivalence classes will be $[0],[1]$ where $[0]$ corresponds to the additive identity in $F$, and $[1]$ corresponds to all of $F^\times$ that is $[1]$ corresponds to any non-zero element.

\end{example}

\begin{example}

(Hyperfield of Signs) The hyperfield of signs can be obtained as a quotient of $\RR$ by $\RR_{>0}$ (or in general any ordered field $(F,<)$ as $F/F_{>0}$). When we look at the equivalence classes of $\SS=\RR/\RR_{>0}$, we have $[1]=\{x:x>0\}$, $[-1]=\{x:x<0\}$, and $[0]=\{0\}$ we have the three elements we saw corresponding to the positive numbers, the negative numbers, and zero. Thus, we now see why it made sense to interpret $[1]$ as any positive number, $[-1]$ as any negative number, and $[0]$ as zero.

\end{example}

\begin{example}\label{ex:ex1}

(Weak Hyperfield of Signs) The weak hyperfield of signs admits a quotient structure $\WW=\FF_p/(\FF_p^\times)^2$ where $p$ is a prime such that $p\geq 7$ and $p\equiv 3\pmod 4$. The fact that this quotient structure works for different primes will be key to showing that we can find two non-isomorphic minimal extensions of a hyperfield containing a root to some polynomial. We will see that taking different quotient structures can lead to different minimal extensions.

\end{example}

\begin{example}

(Phase Hyperfield) The phase hyperfield is a hyperfield with quotient structure $\PP=\CC/\RR_{>0}$. Since we are modding out by the reals, we can think of the phase hyperfield as $S^1\cup \{0\}$ where $S^1$ is the unit circle in the complex plane. We should think of the operations geometrically. Thus, multiplication is the usual multiplication over $\CC$ that is given two points $[x],[y]\in S^1$, $[x]\odot [y]$ is the point on $S^1$ given by adding the angles of $[x]$ and $[y]$. Then addition is given by $[x]\boxplus [0] = \{[x]\}$. For inverses we have $[x]\boxplus[-x]=\{[0],[x],[-x]\}$, we should think of this as since the line through $[x]$ and $[-x]$ goes through the origin the only possible phases you can get are $[x]$ and $[-x]$, and we include $[0]$ as the line goes through the origin. Then for $[x],[y]\neq [0]$ and $[y]\neq[-x]$ we have that $[x]\boxplus[y]$ is the shorter of the two arcs between the points $[x]$ and $[y]$ on $S^1$.

\end{example}

\subsection{Strong and Weak Hyperfield Extensions} 

Since addition is multivalued, there are two natural notions of hyperfield extension that one can take. We shall call them \textit{weak hyperfield extensions} and \textit{strong hyperfield extensions}. The two definitions come from requiring either inclusion or equality. We shall define the hyperfield extensions in terms of homomorphisms of hyperfields. Let $K$ and $L$ be hyperfields and denote $\boxplus_K$ the hyperaddition in $K$ and $\boxplus_L$ the hyperaddition in $L$. Similarly, $\odot_K$ and $\odot_L$ are the multiplications in $K$ and $L$.

\begin{defn}
A \textit{weak hyperfield homomorphism} is a function $\phi:K\rightarrow L$ that satisfies the following:

$\phi(0)=0$

$\phi(1)=1$

$\phi(x\boxplus_K y)\subseteq \phi(x)\boxplus_L\phi(y)$

$\phi(x\odot_K y)=\phi(x)\odot_L \phi(y)$

\end{defn}

\begin{defn}

We say that $\phi: K\rightarrow L$ is a \textit{strong hyperfield homomorphsim} if in addition to being a weak hyperfield homomorphism we have that $\phi(x\boxplus_K y)= \phi(x)\boxplus_L\phi(y)$

\end{defn}

\begin{remark}
We remark that unlike classical field theory, homomorphisms need not be injective. For example, it is not to hard to show that for any hyperfield $K$ that $\phi:K\rightarrow \KK$, given by $\phi(0)=[0]$ and $\phi(x)=[1]$ for $x\neq 0$ is always a weak hyperfield homomorphism.
\end{remark}

Now we shall define hyperfield extensions in terms of homomorpisms of hyperfields; however, we shall require that these homomorphisms be injective. Since we have two notions of homomorphism, we will define two types of extensions.

\begin{defn}
For hyperfields $K$ and $L$, we say that $L$ is a \textit{weak hyperfield extension} of $K$ if there is an injective weak homomorphism of hyperfields $\phi: K\hookrightarrow L$. In this case, we say that $K$ is \textit{weak subhyperfield} of $L$. 

If $\phi$ is a strong homomorphism of hyperfields, we sat that $L$ is a \textit{strong hyperfield extension} of $K$ and we say that $K$ is a \textit{strong subhyperfield} of $L$
\end{defn}

The terminology of weak and strong is natural as if $\phi:K\rightarrow L$ is a strong hyperfield homomorphism, then it is also a weak hyperfield homomorphism. Similarly, strong hyperfield extensions are also weak hyperfield extensions. Ideally, we would like to find strong hyperfield extensions that will contain roots to polynomials. We shall do this for quotient hyperfields. Now let us give some examples of hyperfield extensions.

\begin{example}
The weak hyperfield of signs $\WW$ is a weak extension of the hyperfield of signs $\SS$. We see this via the homomorphism $\phi:\SS\hookrightarrow\WW$ given by $\phi([x])=[x]$. It is a simple exercise to see that this is a weak extension of the hyperfield of signs, but is not a strong extension.
\end{example}

We note, as in the above example, that for weak hyperfield extensions, we can have the size of the hyperfields be the same, but changing the way addition is defined can create extensions. We now give an example of a strong extension.

\begin{example}

The phase hyperfield $\PP=\CC/\RR_{>0}$ is a strong hyperfield extension of the hyperfield of signs $\SS=\RR/\RR_{>0}$. We see this via the homomorphism $\phi:\SS\hookrightarrow\PP$ given by $\phi([x])=[x]$. This is indeed the case, since the equivalence classes $[0]=\{0\}$, \\$[1]=\{x\in\RR: x>0\}$, $[-1]=\{x\in\RR:x<0\}$ are the same in both $\SS$ and $\PP$. Since the equivalence classes are the same, the operation of $\boxplus_\PP$ and $\odot_\PP$ is the same as in $\SS$. Thus, we have a strong extension.

\end{example}

We note that $\PP$ and $\SS$ have a similar structure as quotient hyperfields. Namely, $\PP$ and $\SS$ have the form $F/G$ and $L/G$ for $L$ a field extension of $F$. We can generalize this into the following lemma.

\begin{lemma}\label{lem:lem1}

For field extension $L/F$, for $G\leq F^\times$, we have that the quotient hyperfield $L/G$ is a strong hyperfield extension of $F/G$.

\end{lemma}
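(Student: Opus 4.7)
The plan is to exhibit the map $\phi: F/G \to L/G$ defined by $\phi([x]_F) = [x]_L$ (where $[x]_F$ and $[x]_L$ denote the $\sim$-equivalence class of $x$ as an element of $F$ and of $L$, respectively) and verify that it is an injective strong hyperfield homomorphism. The crucial observation driving everything is that, since $G \leq F^\times \subseteq L^\times$ is the same multiplicative subgroup in both quotients, for any $x \in F$ the set $[x]_L = \{gx : g \in G\}$ is literally equal to $[x]_F$ as a subset of $L$; in particular $[x]_L \subseteq F$.

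First I would verify well-definedness, injectivity, and preservation of $0$, $1$, and multiplication. Well-definedness and injectivity both follow from the fact that $x = gy$ with $g \in G$ is the same relation whether interpreted in $F$ or in $L$, because $G \subseteq F^\times$. Preservation of the identities and of $\odot$ is immediate from $[x] \odot [y] = [xy]$ and the inclusion $F \subseteq L$.

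The heart of the proof is the strong addition identity $\phi\bigl([x]_F \boxplus [y]_F\bigr) = [x]_L \boxplus [y]_L$. The $\subseteq$ direction is straightforward: if $z' = x' + y'$ in $F$ witnesses $[z]_F \in [x]_F \boxplus [y]_F$, then this same equation, now read in $L$, witnesses $[z]_L \in [x]_L \boxplus [y]_L$. For the reverse inclusion, suppose $[w]_L \in [x]_L \boxplus [y]_L$, so there exist $x' \in [x]_L$, $y' \in [y]_L$, and $w' \in [w]_L$ with $w' = x' + y'$ in $L$. By the key observation, $x'$ and $y'$ actually lie in $F$, hence $w' \in F$ as well. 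Then $[w]_L = [w']_L = \phi([w']_F)$, and the relation $w' = x' + y'$ (now viewed inside $F$) together with $x' \in [x]_F$, $y' \in [y]_F$, $w' \in [w']_F$ shows $[w']_F \in [x]_F \boxplus [y]_F$. Applying $\phi$ yields $[w]_L \in \phi\bigl([x]_F \boxplus [y]_F\bigr)$.

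The main (mild) obstacle is precisely this reverse inclusion: one must observe that any $L$-sum of representatives of two $F$-classes is forced back into $F$, so the class $[w]_L$ meets $F$ and is therefore automatically the image of a class from $F/G$. This is the only place where the hypothesis $G \leq F^\times$ (as opposed to merely $G \leq L^\times$) is essential, and everything else is bookkeeping.
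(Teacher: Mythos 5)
Your proposal is correct and follows essentially the same route as the paper: the same map $\phi([x]_{F/G})=[x]_{L/G}$, with the key observation that the $G$-cosets of elements of $F$ are literally the same subsets in both quotients. The only difference is that you spell out the reverse inclusion of the addition identity (that an $L$-sum of representatives of two $F$-classes lands back in $F$), which the paper compresses into a single chain of equalities.
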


\begin{proof}

We shall construct a strong homomorphism of hyperfields. Let us define \\$\phi: F/G\rightarrow L/G$ given by $\phi([x])=[x]$. We shall first show that $[x]_{F/G}=[x]_{L/G}$ where $[x]_{F/G}$ is the equivalence class of $x$ in $F/G$, and $[x]_{L/G}$ is the equivalence class of $x$ in $L/G$. To see this, we see that $[x]_{F/G}=\{xg: g\in G\}$ and $[x]_{L/G}=\{xg: g\in G\}$, since the multiplicative subgroup $G$ is the same in both cases, we see that $[x]_{F/G}=[x]_{L/G}$. We shall now drop the subscript of $[x]$. Since the equivalence classes are the same in $F/G$ and $L/G$ we have that the map $\phi$ is well-defined and injective. Furthermore, we have that the strong homomorphism property holds since
\[
\phi([x]\odot_{F/G}[y])=\phi([xy])=[xy]=[x]\odot_{L/G}[y]=\phi(x)\odot_{L/G}\phi(y)
\]
and 
\[
\phi([x]\boxplus_{F/G}[y])=\phi(\{[z]:z\in [x]+[y]\})=\{[z]:z\in [x]+[y]\}=[x]\boxplus_{L/G}[y]=\phi([x])\boxplus_{L/G}\phi([y])
\]
Thus, we have that $L/G$ is a strong hyperfield extension of $F/G$.
\end{proof}

\subsection{Polynomials over Hyperfields}

Now there are several issues that we encounter when moving to polynomials over hyperfields. In field theory, polynomials over fields form rings; however, we will see that polynomials over hyperfields do not form hyperrings. We will see that the multiplication of polynomials over hyperfields is actually a multivalued operation. Thus, as we are not working over a hyperring, so we can't simply mimic the construction of field extensions as we do not have ideals generated by polynomials over hyperfields. 

\begin{defn}
A \textit{polynomial over a hyperfield} $K$ in variable $T$ is a formal sum
\[
k(T)=\op_{i=0}^n a_iT^i
\]
where each $a_i\in K$. We denote the set of all polynomials over a hyperfield $K$ in variable $T$ by $K[T]$.

\end{defn}

Thus, we see that polynomials look exactly the same over hyperfield as fields with the main difference being that we are taking a hypersum; thus, evaluating a polynomial at an element of $K$ will result in a set rather than a single element. We note that there have been several studies focused more on the algebraic structure of polynomials over hyperfield for example see \cite{article} or \cite{doi:10.1080/09720529.2003.10697978}.

We will make the convention that we will use $x$ as our variable for polynomials over fields, and $T$ will be our variable when we work over hyperfields.

Now the next issue that we encounter encounter is defining roots to polynomials, as when we plug in an element into our polynomial rather than getting back an element, we get a set. So we simply require that $0$ is in the set $k(\Root)$ for $\Root$ to be a root of $k$. We note that a study on the multiplicities of roots to polynomials over hyperfields was developed in \cite{baker2018descartes}.

\begin{defn}
For a polynomial $k\in K[T]$, we say that $\Root$ is a \textit{root} of $k$ if $0\in k(\Root)$.
\end{defn}

We will show that for a quotient hyperfield $K=F/G$, if $k\in K[T]$ does not have any roots, then we can find a strong hyperfield extension $L$ of $K$ such that $k$ contains a root in $L$. One might try the classical construction of modding out by the ideal generated by the polynomial; however, the next example shows that multiplication of polynomials over hyperfields is a multivalued operation. Thus, trying to construct an extension by modding out by an ideal would result in an algebraic structure with a multi-valued multiplication.

\begin{example}

(Hyperfield of Signs) Consider the polynomials $p(T)=1\boxplus T$ and\\ $q(T)=-1\boxplus T$ over the hyperfield of signs. If we were to multiply these polynomials in the usual way we see that:
\[
    (1\boxplus T)(-1\boxplus T)=-1 \boxplus ((-1)T\boxplus T)\boxplus T^2 = -1\boxplus (1\boxplus -1) T\boxplus T^2
\]
That is $p(T)q(T)=\{-1\boxplus T^2, -1\boxplus -T\boxplus T^2, -1 \boxplus T\boxplus T^2\}$, so multiplication of polynomials over hyperfield is not a single valued operation. 

\end{example}

Since quotient hyperfields arise from fields, for a quotient hyperfield $K=F/G$ there is a natural way to get a polynomial $\overline{f}\in K[T]$ from a polynomial $f\in F[x]$.

\begin{defn}
For a quotient hyperfield $K=F/G$, for a polynomial $f\in F[x]$, we define the \textit{induced polynomial} $\overline{f}\in K[T]$ as follows:
\[
f(x)=\sum_{i=0}^na_ix^i\leadsto \overline{f}=\op_{i=0}^n[a_i]T^i
\]
\end{defn}

\section{Constructing Hyperfield Extensions}\label{sec:ext}
 
 \subsection{Strong Extensions for Quotient Hyperfields}
 
For this entire section, let $F$ be a field, $G\leq F^\times$, and let $K=F/G$ be the corresponding quotient hyperfield.

Let $k(T)\in K[T]$ be a polynomial containing no roots in $K$. In this section, we will construct a strong hyperfield extension to $K$ that contains some root $[\root]$ to $k$. 

We begin with a key lemma that will be used in our construction.

\begin{lemma}\label{lem:lem2}

If $f\in F[x]$ with $f(x)=\sum_{i=0}^na_ix^i$, then for $\gamma\in F$, if $f(\gamma)=\alpha$, then for the induced polynomial $\overline{f}$, we have that $[\alpha]\in \overline{f}([\gamma])$.

\end{lemma}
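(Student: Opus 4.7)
The plan is to evaluate $\overline{f}([\gamma])$ explicitly using the multiplication rule of the quotient hyperfield and then appeal directly to Remark \ref{rem:rem1} to transfer a set-sum witness in $F$ to a membership statement in the hypersum.

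First, I would unwind the definition of $\overline{f}([\gamma])$. By definition of the induced polynomial, $\overline{f} = \op_{i=0}^n [a_i]T^i$, so
\[
\overline{f}([\gamma]) \;=\; \op_{i=0}^n [a_i]\odot[\gamma]^i.
\]
Using the rule $[x]\odot[y] = [xy]$ in a quotient hyperfield together with a quick induction showing $[\gamma]^i = [\gamma^i]$, this simplifies to $\overline{f}([\gamma]) = \op_{i=0}^n [a_i\gamma^i]$.

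Next I would observe the trivial but key fact that $a_i\gamma^i \in [a_i\gamma^i]$ for each $i$, since every element of $F$ lies in its own coset under $\sim$. Taking the ordinary set sum of these cosets, this gives
\[
\alpha \;=\; \sum_{i=0}^n a_i\gamma^i \;\in\; \sum_{i=0}^n [a_i\gamma^i],
\]
where the right-hand side is the iterated set sum in $F$. At this point Remark \ref{rem:rem1} applies verbatim: since $\alpha$ is representable as $z = \sum_i x_i$ with $x_i \in [a_i\gamma^i]$, we conclude $[\alpha] \in \op_{i=0}^n [a_i\gamma^i] = \overline{f}([\gamma])$, which is exactly the desired membership.

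I do not foresee a real obstacle: the statement is essentially bookkeeping, the content being that a genuine equality $f(\gamma) = \alpha$ in $F$ always projects to a hypersum membership after passing to cosets, which is exactly what Remark \ref{rem:rem1} is set up to provide. The only thing to be careful about is unpacking the recursive definition of arbitrarily long hypersums and confirming that it is compatible with the set-sum formulation — but this is precisely the content of the remark, so invoking it closes the argument cleanly.
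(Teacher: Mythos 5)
Your proposal is correct and follows essentially the same route as the paper's own proof: compute $\overline{f}([\gamma])=\op_{i=0}^n[a_i\gamma^i]$ via the quotient multiplication rule, observe that $\alpha=\sum_i a_i\gamma^i$ lies in the set sum $\sum_i[a_i\gamma^i]$, and invoke Remark \ref{rem:rem1}. Your version merely spells out a couple of steps (the induction giving $[\gamma]^i=[\gamma^i]$ and the fact that $a_i\gamma^i\in[a_i\gamma^i]$) that the paper leaves implicit.
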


\begin{proof}

We want to show that $[\alpha]\in \overline{f}([\gamma])$, now
\[
\overline{f}([\gamma])=\op_{i=0}^n[a_i][\gamma]^i=\op_{i=0}^n[a_i\gamma^i]
\]

Since $\alpha=\sum_{i=0}^n a_i\gamma^i$, we have that $\alpha\in [a_0\gamma^0]+[a_1\gamma^1]+...+[a_n\gamma^n]$, so by Remark \ref{rem:rem1}, we have that $[\alpha]\in\op_{i=0}^n[a_i\gamma^i]$ that is $[\alpha]\in\overline{f}([\gamma])$.
\end{proof}

\begin{corollary}\label{cor:cor1}

If $f\in F[x]$ has a root, then $\overline{f}\in K[T]$ has a root.

\end{corollary}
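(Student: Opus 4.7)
The plan is to apply Lemma \ref{lem:lem2} directly with the special value $\alpha = 0$. Suppose $f \in F[x]$ has a root $\gamma \in F$, meaning $f(\gamma) = 0$. The natural candidate for a root of the induced polynomial $\overline{f} \in K[T]$ is the equivalence class $[\gamma] \in K$, so the only thing to verify is that $[0] \in \overline{f}([\gamma])$.

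This verification is immediate from Lemma \ref{lem:lem2}. Setting $\alpha = f(\gamma) = 0$ in the statement of that lemma gives $[\alpha] = [0] \in \overline{f}([\gamma])$. By the definition of a root of a polynomial over a hyperfield, this says exactly that $[\gamma]$ is a root of $\overline{f}$, which is what we wanted to show.

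There is essentially no obstacle here; the corollary is really just the $\alpha = 0$ specialization of the previous lemma, with the only point of content being the recognition that $f(\gamma) = 0$ translates correctly under the coset map $F \to F/G$ (which is built into the definition since $[0] = \{0\}$). No further case analysis, and no assumption on $G$ beyond what is already in force, is required.
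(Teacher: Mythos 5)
Your proof is correct and is essentially identical to the paper's: both simply apply Lemma \ref{lem:lem2} with $\alpha = f(\gamma) = 0$ to conclude $[0] \in \overline{f}([\gamma])$, so that $[\gamma]$ is a root of $\overline{f}$ by definition. No issues.
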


\begin{proof}
Since $f$ has a root, we have that there is some $\root$ such that $f(\root)=0$, so by Lemma \ref{lem:lem2} above $[0]\in \overline{f}([\root])$.
\end{proof}

\begin{lemma}\label{lem:lem3}

If $k\in K[T]$ contains no roots, then there exists a polynomial $f\in F[x]$ with  no roots in $F$ such that $\overline{f}=k$

\end{lemma}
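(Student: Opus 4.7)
The plan is to construct $f$ by lifting coefficients to $F$ arbitrarily, and then invoke the contrapositive of Corollary \ref{cor:cor1} to show this lift has no roots in $F$.

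First I would write $k(T) = \op_{i=0}^n [a_i] T^i$, where by definition each coefficient $[a_i]$ is an equivalence class in $K = F/G$, hence a nonempty subset of $F$. For each $i$, pick any representative $\tilde a_i \in [a_i] \subseteq F$ (taking $\tilde a_0 = 0$ if $[a_0] = [0]$). Set
\[
f(x) = \sum_{i=0}^n \tilde a_i x^i \in F[x].
\]
By construction $[\tilde a_i] = [a_i]$ for all $i$, so the induced polynomial satisfies $\overline{f} = \op_{i=0}^n [\tilde a_i] T^i = k$.

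Next I would verify that this $f$ has no roots in $F$. Suppose for contradiction that $\gamma \in F$ satisfies $f(\gamma) = 0$. Then by Lemma \ref{lem:lem2} applied with $\alpha = 0$, we have $[0] \in \overline{f}([\gamma]) = k([\gamma])$, which means $[\gamma] \in K$ is a root of $k$. This contradicts the hypothesis that $k$ has no roots in $K$. Equivalently, this is just the contrapositive of Corollary \ref{cor:cor1}.

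Since nothing in the argument is delicate, I do not anticipate a real obstacle here: the entire content is that the map $f \mapsto \overline{f}$ is clearly surjective onto $K[T]$ (we can always lift coefficients), and the lift inherits rootlessness for free from the already-established Lemma \ref{lem:lem2}. The only mild care needed is to remark that any choice of representatives works, so the lemma is in fact a statement about the existence of \emph{some} such $f$, not uniqueness.
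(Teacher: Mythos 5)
Your proposal is correct and is essentially identical to the paper's proof: both lift the coefficients of $k$ to representatives in $F$ to form $f$ with $\overline{f}=k$, and both conclude that $f$ has no roots by contradiction via Corollary \ref{cor:cor1} (equivalently, Lemma \ref{lem:lem2} with $\alpha=0$). Your added remark that any choice of coset representatives works is a harmless clarification of what the paper does implicitly.
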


\begin{proof}

Let $k(T)=\boxplus_{i=0}^n[a_i]T^i$ be a polynomial over $K$, then consider the polynomial $f(x)=\sum_{i=0}^na_ix^i$. Clearly, $\overline{f}=k$, now assume for sake of contradiction that $f$ has some root $\root$ in $F$, but by the above corollary, this would imply that $\overline{f}=k$ would have a root, but this contradicts the assumption that $k$ does not have a root.
\end{proof}

 The idea behind our construction will be to find a polynomial $f\in F[x]$ with no roots such that $\overline{f}=k$. Then as $f$ will have is over the field $F$, we can apply classical field theory to find an extension $F'/F$ that contains a root to $f$, then we will form the quotient $F'/G$ and show that this is a strong hyperfield extension of $K$.

\begin{theorem}\label{thm:thm1}

If $k\in K[T]$ contains, no root, then there is a strong hyperfield extensions $K\subseteq L$.

\end{theorem}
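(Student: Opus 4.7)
The plan is to reduce the construction to classical field theory via the quotient hyperfield structure. Interpreting the statement as asserting that there is a strong hyperfield extension $K \subseteq L$ in which $k$ has a root, I would proceed in three steps. First, apply Lemma \ref{lem:lem3} to lift $k$ to a polynomial $f \in F[x]$ with no roots in $F$ and such that $\overline{f} = k$. Since $f$ is a polynomial over the field $F$, classical field theory guarantees an extension $F \subseteq F'$ containing an element $\xi$ with $f(\xi) = 0$; one may take $F' = F[x]/(p)$ for any irreducible factor $p$ of $f$, or the splitting field of $f$ over $F$.

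Next, I would form the candidate extension $L := F'/G$. This makes sense because $G \leq F^\times \leq (F')^\times$, so $G$ is still a multiplicative subgroup of the larger field $F'$, and the quotient hyperfield is well-defined. By Lemma \ref{lem:lem1} applied to the field extension $F'/F$ and the subgroup $G$, the map $\phi \colon K \to L$ defined by $\phi([x]) = [x]$ is an injective strong hyperfield homomorphism. Thus $L$ is a strong hyperfield extension of $K$.

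It remains to check that the element $[\xi] \in L$ is a root of $k$. Here I would use the observation, already recorded in the proof of Lemma \ref{lem:lem1}, that for any $a \in F$ the equivalence class $[a]_{F/G}$ coincides set-theoretically with $[a]_{F'/G}$, since both equal $\{ag : g \in G\}$. Consequently the polynomial $k = \op_{i=0}^n [a_i] T^i \in K[T]$, viewed inside $L[T]$ via $\phi$, is literally the induced polynomial $\overline{f} \in L[T]$ attached to $f \in F'[x]$. Since $f(\xi) = 0$ in $F'$, Lemma \ref{lem:lem2} (applied over $F'$ rather than $F$) gives $[0] \in \overline{f}([\xi]) = k([\xi])$, so $[\xi]$ is a root of $k$ in $L$, as required.

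The only real subtlety is the bookkeeping in the final step: one must be careful that ``$k$'' and ``$\overline{f}$'' really denote the same polynomial over $L$, which is the content of the observation that equivalence classes modulo $G$ do not depend on whether one views representatives inside $F$ or inside $F'$. Everything else is a direct transcription of the classical construction of field extensions into the quotient-hyperfield language, with Lemmas \ref{lem:lem1}, \ref{lem:lem2}, and \ref{lem:lem3} doing all of the dictionary work.
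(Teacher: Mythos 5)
Your proposal is correct and follows essentially the same route as the paper: lift $k$ to a rootless $f \in F[x]$ via Lemma \ref{lem:lem3}, pass to a classical field extension $F'$ containing a root, set $L = F'/G$, and invoke Lemma \ref{lem:lem1} for strongness and Lemma \ref{lem:lem2} (the paper cites its Corollary \ref{cor:cor1} instead, which is the same content) for the root. Your extra remark that $[a]_{F/G} = [a]_{F'/G}$, so that $k$ and the induced polynomial $\overline{f}$ over $F'/G$ literally coincide in $L[T]$, is a point the paper's proof passes over silently, and making it explicit is a small improvement rather than a departure.
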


\begin{proof}

Let us construct the hyperfield extension. Since $k(T)$ contains no roots, by Lemma \ref{lem:lem3} there is a polynomial $f\in F[x]$ that contains no roots over $F$ such that $\overline{f}=k$. Since $f\in F[x]$ contains no roots over $F[x]$, there is a field extension $F\subseteq F'$ such that $f$ contains a root $\root\in F'$. Now let $L=F'/G$ be the quotient hyperfield taken by modding out by the same multiplicative subgroup by Lemma \ref{lem:lem1} we have that $L$ is a strong hyperfield extension of $K$. Now since $f$ has a root in $F'$ by Corollary \ref{cor:cor1} we have that $\overline{f}=k$ has a root in $L$. Thus, $L$ is a strong hyperfield extension of $K$ that contains a root to $k$.
\end{proof}

\begin{example}

The polynomial $1\boxplus T^2$ contains no roots over $\SS$. Since $\SS$ has the quotient structure of $\RR/\RR_{>0}$, we lift $1\boxplus T^2$ to $1+x^2\in \RR[x]$. Now we take the field extension containing a root to $1+x^2$, namely $\CC$. Thus, our extension is $\CC/\RR_{>0}=\PP$.

\end{example}

The construction that we gave relies on the fact that we could lift our polynomial over a hyperfield to a polynomial with no roots over a field. Thus, we do not have a construction for extensions for non-quotient hyperfields. 

\begin{question}
Given a polynomial over a hyperfield is there a hyperfield extensions (weak or strong) that contains a root to the polynomial?
\end{question}

\section{An Example of Two Non-Isomorphic Minimal Extensions}\label{sec:minext}

In this section we will give an explicit example of a polynomial over a hyperfield such that there are two non-isomorphic minimal extensions containing a root to this polynomial. To do this, we will show that a certain extension is minimal, and show that the other extension does not contain the minimal extension as a subhyperfield (in both the weak and strong sense).

Now for our construction, we will consider the weak hyperfield $\WW$. As we saw in Example \ref{ex:ex1}, we know that if $p$ is a prime such that $p\geq 7$, and $p\equiv 3\pmod 4$, then $\WW$ has the quotient structure $\FF_p/(\FF^\times_p)^2$ where $(\FF^\times_p)^2$ denotes the non-zero squares. 

The polynomial $k(T)=1\boxplus T^2$ contains no roots over $\WW$. Thus, by taking the two different quotient structures of $\WW=\FF_7/(\FF_7^\times)^2$ and $\WW=\FF_{11}/(\FF_{11}^\times)^2$, we get strong extensions containing roots using the construction from Theorem \ref{thm:thm1}. The extensions have the quotient structures $\FF_{49}/(\FF_7^\times)^2$ and $\FF_{121}/(\FF_{11}^\times)^2$ respectively. 

We will first show that $\FF_{49}/(\FF_7^\times)^2$ is a minimal extension, then we shall show that $\FF_{121}/(\FF_{11}^\times)^2$ does not contain $\FF_{49}/(\FF_7^\times)^2$ as a subhyperfield, so any the minimal subhyperfield of $\FF_{121}/(\FF_{11}^\times)^2$ containing a root to $k(T)$ is not isomorphic to $\FF_{49}/(\FF_7^\times)^2$; thus, there are two non-isomorphic minimal extensions.

\begin{theorem}\label{lem:lem4}

The extension $\WW\subseteq \FF_{49}/(\FF_7^\times)^2$ is a minimal extension of $\WW$ containing a root to $k(T)$.

\end{theorem}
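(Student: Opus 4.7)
The plan is to classify all strong subhyperfields of $L := \FF_{49}/(\FF_7^\times)^2$; writing $G := (\FF_7^\times)^2$, I will argue that they are in bijection with the subfields $F' \subseteq \FF_{49}$ containing $G$, of which there are only two. Since $k(T) = 1 \boxplus T^2$ has no root in $\WW$ by hypothesis, this forces the minimal strong subhyperfield of $L$ carrying a root to be $L$ itself.

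\textbf{Subfield correspondence.} Given a strong subhyperfield $M \subseteq L$, I would form its \emph{lift} $\tilde{M} := \bigcup_{[a] \in M}[a] \subseteq \FF_{49}$, the union of the $G$-cosets lying in $M$. The plan is to verify that $\tilde{M}$ is a subfield of $\FF_{49}$ containing $G$. Containment of $G = [1]$ and closure under multiplication, multiplicative inverses, and additive inverses all follow directly from $M$ being a subhyperfield. The crucial point is additive closure: for $a, b \in \tilde{M}$, Remark \ref{rem:rem1} gives $[a+b] \in [a] \boxplus_L [b]$; since $M$ is a \emph{strong} subhyperfield, $[a] \boxplus_L [b] = [a] \boxplus_M [b] \subseteq M$, hence $[a+b] \in M$ and $a+b \in \tilde{M}$. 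Conversely, for any subfield $F' \subseteq \FF_{49}$ with $G \subseteq F'$, Lemma \ref{lem:lem1} yields a strong embedding $F'/G \hookrightarrow L$, and the two constructions are mutually inverse.

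\textbf{Counting and concluding.} Because $\FF_{49} = \FF_{7^2}$ has only the subfields $\FF_7$ and $\FF_{49}$, both of which contain $G \subseteq \FF_7^\times$, the only strong subhyperfields of $L$ are $\WW = \FF_7/G$ and $L$ itself. A direct check rules out roots of $k$ in $\WW$: for each $[\alpha] \in \{[0],[1],[-1]\}$ we have $[\alpha]^2 \in \{[0],[1]\}$, and neither $[1]\boxplus[0] = \{[1]\}$ nor $[1]\boxplus[1] = \{[1],[-1]\}$ contains $[0]$. Hence any strong subhyperfield of $L$ containing a root of $k$ must strictly contain $\WW$, and therefore must equal $L$.

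\textbf{Main obstacle.} The technical crux is the additive closure of $\tilde{M}$, and it is essential here that $M$ be a strong subhyperfield: a merely weak inclusion $\boxplus_M \subseteq \boxplus_L$ would not let us promote $[a+b] \in [a]\boxplus_L[b]$ to the required $[a+b] \in M$, so the lifting argument would break down. Once the subfield correspondence is in hand, the remainder reduces to the elementary observation that $\FF_{49}$ admits no intermediate subfield above $\FF_7$.
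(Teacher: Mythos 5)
Your subfield--subhyperfield correspondence is correct as far as it goes, and it gives a clean proof that $L=\FF_{49}/(\FF_7^\times)^2$ has no proper \emph{strong} subhyperfield strictly between $\WW$ and $L$. But this proves a weaker statement than the one the paper establishes. The paper's proof takes $K$ to be an arbitrary \emph{weak} subhyperfield with $\WW\subseteq K\subseteq L$ containing a root, and shows $K=L$; since every strong subhyperfield is a weak one but not conversely, minimality in the weak sense is the stronger claim, and it is the one the surrounding discussion relies on (the paper explicitly frames the non-uniqueness argument ``in both the weak and strong sense''). You flag this yourself in your last paragraph: the additive-closure step for the lift $\tilde M$ genuinely needs $[a]\boxplus_M[b]=[a]\boxplus_L[b]$, and with only $[a]\boxplus_M[b]\subseteq[a]\boxplus_L[b]$ you cannot conclude $[a+b]\in M$. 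So your argument leaves open the possibility of a proper weak subhyperfield of $L$, containing $\WW$ and a root of $k$, whose underlying set is not of the form $F'/G$ at all.

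That possibility is exactly what the paper has to work to exclude. Concretely, $L^\times$ is cyclic of order $16$, and the order-$8$ subgroup $\{[1],[-1],[i],[-i],[i+1],[i-1],[-i+1],[-i-1]\}$ together with $[0]$ contains $\WW$ and the roots $[\pm i]$; nothing in your argument rules out a weak hyperaddition on this nine-element set making it a weak subextension. The paper eliminates it (and all smaller candidates, via a counting argument using Lagrange's theorem and the fact that $[1]\boxplus_K[i]$ must be a nonempty subset of $[1]\boxplus_L[i]$) by computing $[i]\boxplus_L[i+1]=\{[i-3],[-i-3],[-i+2]\}$, which is disjoint from that order-$8$ subgroup, so $[i]\boxplus_K[i+1]$ would have to be empty --- a contradiction. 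To repair your proof you would need to supplement the strong-subhyperfield classification with an argument of this kind handling weak subhyperfields whose underlying sets are proper subsets of $L$ other than $\WW$.
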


\begin{proof}

Let us denote $L=\FF_{49}/(\FF_7^\times)^2$. We first note that by construction $L$ contains a root to $k(T)$, so we simply need to show that this extension is indeed minimal. It will be helpful to think of $\FF_{49}$ as $\FF_7[i]$ where $i^2=-1$. Furthermore, we note that $(\FF_{7}^\times)^2=\{1,2,-3\}$. Now to see that this extension is minimal, let us consider the equivalence classes:

{
$[0]=\{0\}$

$[1]=\{1,2,-3\}$

$[-1]=\{-1,-2,3\}$

$[i]=\{i,2i,-3i\}$

$[-i]=\{-i,-2i,3i\}$

$[i+1]=\{i+1,2i+2,-3i-3\}$

$[-i+1]=\{-i+1,-2i+2,3i-3\}$

$[i+2]=\{i+2,2i-3,-3i+1\}$

$[-i+2]=\{-i+2,-2i-3,3i+1\}$

$[i+3]=\{i+3,2i-1,-3i-2\}$

$[-i+3]=\{-i+3,-2i-1,3i-2\}$

$[i-1]=\{i-1,2i-2,-3i+3\}$

$[-i-1]=\{-i-1,-2i-2,3i+3\}$

$[i-2]=\{i-2,2i+3,-3i-1\}$

$[-i-2]=\{-i-2,-2i+3,3i-1\}$

$[i-3]=\{i-3,2i+1,-3i+2\}$

$[-i-3]=\{-i-3,-2i+1,3i+2\}$
}

We note that the elements of $L^\times$ form a cyclic group of order $16$. We know this as $((\FF_{49})^\times, \cdot)$ is a cyclic group, and $L^\times$ is a quotient group. Hence, it is cyclic. 

Now let $K$ be a weak subhyperfield of $L$ such that $\WW\subseteq K\subseteq \FF_{49}/(\FF_7^\times)^2$, and such that $K$ contains a root to $k(T)$. Our goal will be to show that $K^\times=L^\times$, that is, $K$ contains all nonzero elements of $L$. This will prove that $K=L$, and $L$ is a minimal extension containing a root to $k(T)$. We note that as $K\subseteq L$, we have that $K^\times \leq L^\times$; thus, by Lagrange's theorem $\vert K^\times \vert \mid 16$.

Since $\WW\subseteq K$, we know that $[0],[1],[-1]\in K$. Furthermore, since $K$ contains a root to k(T) and $K\subseteq L$ we know that either $[i]$ or $[-i]$ is an element of $K$ as these are the roots of $k(T)$ in $L$. However, we have that both of these elements are in $K$ as $[-i]=[-1]\odot [i]$ and as $K$ is a hyperfield, it is closed under $\odot$.

Now since $L$ is a weak hyperfield extension of $K$ and $[1],[i]\in K$, we have that:
\[
[1]\boxplus_K [i]\subseteq [1]\boxplus_L [i]=\{[i+1],[i+2],[i-3]\}
\]
Thus, $K$ contains some element $y\in\{[i+1],[i+2],[i-3]\}$, so $[1],[-1],[i],[-i],y\in K$. That is $K$ contains at least $5$ nonzero elements. Since $\vert K^\times \vert\mid 16$, $\vert K\vert =8$ or $\vert K\vert = 16$.

Assume for sake of contradiction that $K\neq L$, then $\vert K^\times \vert=8$, so $K^\times$ will contain all elements of order at most $8$. Thus, we have would have that $K^\times=\{[1],[-1],[i],[-i],[1+i],$\\$ [i-1],[-i+1],[-i-1]\}$ as these are the only elements of order at most $8$ in $L^\times$.

However, since $L$ is a weak hyperfield extension of $K$, we have that:
\[
[i]\boxplus_K[i+1]\subseteq [i]\boxplus_L[i+1]=\{[i-3],[-i-3],[-i+2]\}
\]
However, none of these elements are in $K$; thus, $K$ cannot be a hyperfield a contradiction. Hence, we have that $K=L$. Namely, $L$ is a minimal extension of $\WW$ containing a root to $k(T)$. 
\end{proof}

Now that we have shown that $\FF_{49}/(\FF_7^\times)^2$ is a minimal extension. We shall show that $\FF_{49}/(\FF_7^\times)^2$ is not contained as a subhyperfield of $\FF_{121}/(\FF_{11}^\times)^2$.

\begin{theorem}
$\FF_{49}/(\FF_7^\times)^2$ is not contained as a subhyperfield of $\FF_{121}/(\FF_{11}^\times)^2$. Thus, minimal extensions containing roots are not unique for polynomials over hyperfields.
\end{theorem}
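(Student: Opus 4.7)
The plan is to exploit the multiplicative-group structure: a (weak or strong) hyperfield embedding restricts to an injective group homomorphism on units, so Lagrange's theorem can be used to rule out the embedding. Write $L_1 = \FF_{49}/(\FF_7^\times)^2$ and $L_2 = \FF_{121}/(\FF_{11}^\times)^2$. The first step is to compute the orders of the multiplicative groups: $(\FF_7^\times)^2 = \{1,2,4\}$ has order $3$ while $|\FF_{49}^\times|=48$, so $|L_1^\times|=16$; and $(\FF_{11}^\times)^2 = \{1,3,4,5,9\}$ has order $5$ while $|\FF_{121}^\times|=120$, so $|L_2^\times|=24$.

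Second, suppose for contradiction that $L_1$ is a weak subhyperfield of $L_2$, so that there is an injective weak hyperfield homomorphism $\phi\colon L_1 \hookrightarrow L_2$. By the definition of a weak hyperfield homomorphism, $\phi$ still preserves multiplication with equality, i.e.\ $\phi(x\odot y)=\phi(x)\odot\phi(y)$, so its restriction $\phi|_{L_1^\times}\colon L_1^\times \to L_2^\times$ is an injective group homomorphism. Lagrange's theorem then forces $|L_1^\times| = 16$ to divide $|L_2^\times| = 24$, which is false. This simultaneously rules out $L_1$ as a strong subhyperfield of $L_2$, since every strong hyperfield extension is also a weak one.

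For the non-uniqueness conclusion, the approach is to produce a second minimal extension of $\WW$ containing a root of $k(T) = 1\boxplus T^2$ sitting inside $L_2$. By Theorem \ref{thm:thm1}, $L_2$ is a strong hyperfield extension of $\WW$ containing such a root. Because $L_2$ is finite, among all subhyperfields of $L_2$ containing (the image of) $\WW$ together with a root of $k(T)$, we may pick one, call it $M$, that is minimal with respect to inclusion. Then $M$ is itself a minimal weak hyperfield extension of $\WW$ containing a root of $k(T)$, and $|M^\times|$ divides $|L_2^\times| = 24$ by Lagrange, so $|M^\times|\neq 16$. Since Theorem \ref{lem:lem4} gives that $L_1$ is also a minimal extension of $\WW$ containing a root, with $|L_1^\times|=16$, the two minimal extensions $L_1$ and $M$ cannot be isomorphic.

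The main obstacle I anticipate is being careful about the precise notion of "subhyperfield" used here, since the paper permits weak homomorphisms where hyperaddition is only preserved up to inclusion. The Lagrange-theorem step survives precisely because the multiplicative part of a weak hyperfield homomorphism is still a strict group homomorphism, and injectivity passes to the units; this is the one place where the definition has to be read carefully. A minor subtlety is the well-definedness of the "minimal $M$ inside $L_2$" — this is immediate from finiteness of $L_2$, but it is worth stating explicitly rather than invoking Zorn's lemma.
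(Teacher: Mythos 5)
Your proof is correct and follows essentially the same route as the paper: both arguments reduce the embedding question to Lagrange's theorem on the multiplicative groups, using $|L_1^\times|=16 \nmid 24 = |L_2^\times|$. Your version is slightly more careful than the paper's in two places --- noting explicitly that a weak hyperfield homomorphism still preserves multiplication with equality (so the units map is a genuine injective group homomorphism), and justifying the existence of a minimal subextension $M$ inside $L_2$ by finiteness --- but these are refinements of the same argument, not a different approach.
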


\begin{proof}
Assume for sake of contradiction that $ \FF_{121}/(\FF_{11}^\times)^2$ is a hyperfield extension of $\FF_{49}/(\FF_7^\times)^2$. Then  $(\FF_{49}/(\FF_7^\times)^2)^\times \leq (\FF_{121}/(\FF_{11}^\times)^2)^\times$, so $16\mid \vert(\FF_{121}/(\FF_{11}^\times)^2)^\times\vert=24$ a contradiction. Thus, $\FF_{49}/(\FF_7^\times)^2$ is not a subhyperfield of $\FF_{121}/(\FF_{11}^\times)^2$. 

Namely, the minimal subhyperfield of $\FF_{121}/(\FF_{11}^\times)^2$ containing a root to $k(T)$ is not isomorphic to the minimal extension $\FF_{49}/(\FF_7^\times)^2$, that is, minimal extensions containing roots need not be unique for polynomials over hyperfields. 
\end{proof}

\bibliographystyle{alpha}
\bibliography{references}

\end{document}